\newtheorem{defi}{Definition}
\newcommand{\brdef}{\begin{defi}}
\newcommand{\erdef}{\end{defi}}
\newtheorem{cor}{Corollary}
\newcommand{\bcor}{\begin{cor}}
\newcommand{\ecor}{\end{cor}}
\newtheorem{thm}{Theorem}
\newcommand{\bth}{\begin{thm}}
\newcommand{\eth}{\end{thm}}
\newtheorem{lem}{Lemma}
\newcommand{\ble}{\begin{lem}}
\newcommand{\ele}{\end{lem}}
\newtheorem{rem}[thm]{Remark}
\numberwithin{equation}{section}
\begin{document}
\begin{center}
{\Large \bf Ricci Soliton and Geometrical Structure in a Perfect Fluid Spacetime with Torse-forming Vector Field}\\
\
\\
{ Venkatesha{\footnote {Corresponding Author}} and Aruna Kumara H}\\
\end{center}
\begin{quotation}
{\bf Abstract:} In this paper geometrical aspects of  perfect fluid spacetime with torse-forming vector field $\xi$ are discribed and Ricci soliton in perfect fluid spacetime with torse-forming vector field $\xi$ are determined. Conditions for the Ricci soliton to be expanding, steady or shrinking
are also given. \\
{\bf 2010 Mathematics Subject Classification:} 53B50, 53C44, 53C50, 83C02.\\
 {\bf Key Words:} Perfect fluid spacetime. Einstein field equation.
 Energy momentum tensor. Lorentz space. Ricci soliton.
\end{quotation}

\section{\bf Introduction}
The spacetime of general relativity and cosmology can be modeled as a connected $4-$dimensional pseudo-Riemannian manifold with Lorentzian metric $g$ with signature $(-,+,+,+)$. The geometry of Lorentzian manifold begins with the study of causal character of vectors of the manifold, due to this causality that Lorentzian manifold becomes a convenient choice for the study of general relativity. 
\par In the general theory of relativity, energy-momentum tensor plays an important  role and the condition on the energy-momentum tensor for a perfect fluid spacetime changes the nature of spacetime\cite{SH}. A matter content of the spacetime is described by the energy-momentum tensor, matter is assumed to be a fluid having density, pressure and possessing dynamical and kinematical quantities like velocity, acceleration, vorticity, shear and expansion. Since the matter content of the
universe is assumed to behave like a perfect fluid in the standard cosmological models.
\par A perfect fluid is a fluid that can be completely characterized by its rest frame mass density and isotropic pressure. Perfect fluids are often used in general relativity to model idealized distributions of matter, such as the interior of a star or an isotropic universe. It has no shear stress, viscosity, or heat conduction and is characterized by an energy-momentum
tensor $T$ of the form\cite{BO}
\begin{align}
\label{1.1} T(X,Y)=\rho g(X,Y)+(\sigma+\rho)\eta(X)\eta(Y),
\end{align}
where $\rho$ is the isotropic pressure, $\sigma$ is the energy-density, $\eta(X)=g(X,\xi)$ is $1-$form, equivalent to unit vector $\xi$ and $g(\xi,\xi)=-1$. The field equation governing the perfect fluid motion is Einstein's gravitational equation\cite{BO}
\begin{align}
\label{1.2} S(X,Y)+\left(\lambda-\frac{r}{2}\right)g(X,Y)=\kappa T(X,Y),
\end{align}

where $S$ is Ricci tensor, $r$ is scalar curvature of $g$, $\lambda$ is the cosmological constant and $\kappa$ is the gravitational constant (which can be taken $8\pi G$, with $G$ as universal gravitational constant). According to Einstein, equation \eqref{1.2} obtained from Einstein equation by adding cosmological constant to get static universe. In modern cosmology, it is considered as a candidate for dark energy, the cause of the acceleration of the expansion of the universe.
\par In view of \eqref{1.1}, equation \eqref{1.2} takes the form
\begin{align}
\label{1.3} S(X,Y)=\left(-\lambda+\frac{r}{2}+\kappa\rho\right)g(X,Y)+\kappa(\sigma+\rho)\eta(X)\eta(Y).
\end{align}
Suppose Ricci tensor $S$ is a functional combination of $g$ and $\eta\otimes\eta$, called quasi-Einstein\cite{CH}. Quasi-Einstein manifolds arose during the study of exact solutions of Einstein field equation. For example, the Robertson-Walker spacetime are quasi-Einstein manifolds\cite{Des}. They also can be taken as a model of the perfect fluid spacetime in general relativity (\cite{UCD}, \cite{UCDe}). Perfect fluid spacetime are extensively studied in many purpose of view; we may refer to (\cite{ZA},\cite{CHM},\cite{DEU},\cite{SG},\cite{SM},\cite{SR}) and references therein. 
\par Ricci flows are intrinsec geometric flows on a pseudo-Riemannian
manifold, whose fixed points are solitons, it was introduced by Hamilton\cite{HAM}. Ricci solitons also correspond to
self- similar solutions of Hamilton's Ricci flow. They are natural generalization of Einstein metrics
and is defined by
\begin{align}
\label{1.4}(L_V g)(X,Y)+2S(X,Y)+2\Lambda g(X,Y)=0,
\end{align}
for some constant $\Lambda$, a potential vector field $V$. The Ricci soliton is said to be shrinking, steady, and expanding according as $\Lambda$ is negative, zero, and positive respectively. In the papers(\cite{BEJ}-\cite{CAL},\cite{RS},\cite{RSB}), Ricci solitons are studied extensively within the background of psuedo-Riemannian geometry.

In this paper, we are interested to study some geometrical aspect and Ricci soliton in perfect fluid spacetime with torse-forming vector field $\xi$.

\section{\bf Basic Properties of Perfect Fluid Spacetime with Torse-forming Vector Field}
Let $(M^4,g)$ be a relativistic perfect fluid spacetime satisfying \eqref{1.3}. Contracting \eqref{1.3} and taking under consideration that $g(\xi,\xi)=-1$, we get
\begin{align}
\label{2.1} r=4\lambda+\kappa(\sigma-3\rho).
\end{align}
Therefore
\begin{align}
\label{2.2} S(X,Y)=ag(X,Y)+b\eta(X)\eta(Y),
\end{align}
where $a=\lambda+\frac{\kappa(\sigma-\rho)}{2}$, and $b=\kappa(\sigma+\rho)$, or equivalently,
\begin{align}
\label{2.3}QX=aX+b\eta(X)\xi.
\end{align}

We shall treat $\xi$ as a torse-forming vector field. According to general definition(\cite{AMB},\cite{YAK}), it is a covariant derivative satisfying
\begin{align}
\label{2.4}\nabla_X\xi=X+\eta(X)\xi.
\end{align} 
\begin{thm}
	On perfect fluid spacetime with torse-forming vector field $\xi$, the following relations hold;
	\begin{align}
	\label{2.5}(\nabla_X\eta)(Y)&=g(X,Y)+\eta(X)\eta(Y),\\
	\label{2.6} \eta(\nabla_X \xi)&=0, \qquad \nabla_\xi \xi=0,\\
	\label{2.7}R(X, Y)\xi&=\eta(Y)X-\eta(X)Y,\\
	\label{2.8} R(X,\xi)\xi&=-X-\eta(X)\xi,\\
	\label{2.9}\eta(R(X,Y)Z)&=\eta(X)g(Y,Z)-\eta(Y)g(X,Z),\\
	\label{2.10}(L_\xi g)(X,Y)&=2[g(X,Y)+\eta(X)\eta(Y)],
	\end{align}	
\end{thm}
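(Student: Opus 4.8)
The plan is to derive all of \eqref{2.5}--\eqref{2.10} directly from the torse-forming identity \eqref{2.4}, using only metric compatibility and torsion-freeness of $\nabla$ together with $g(\xi,\xi)=-1$; the perfect-fluid field equation \eqref{2.2} plays no role here.

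First I would establish \eqref{2.5}. Since $\eta(Y)=g(Y,\xi)$ and $\nabla g=0$, the definition of the covariant derivative of a $1$-form gives $(\nabla_X\eta)(Y)=X(g(Y,\xi))-g(\nabla_XY,\xi)=g(Y,\nabla_X\xi)$, and substituting \eqref{2.4} yields $g(Y,X+\eta(X)\xi)=g(X,Y)+\eta(X)\eta(Y)$. For \eqref{2.6}, contracting \eqref{2.4} with $\xi$ gives $\eta(\nabla_X\xi)=g(X,\xi)+\eta(X)g(\xi,\xi)=\eta(X)-\eta(X)=0$, while setting $X=\xi$ in \eqref{2.4} and using $\eta(\xi)=g(\xi,\xi)=-1$ gives $\nabla_\xi\xi=\xi-\xi=0$. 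Relation \eqref{2.10} is then immediate, either from the identity $(L_\xi g)(X,Y)=(\nabla_X\eta)(Y)+(\nabla_Y\eta)(X)$ combined with \eqref{2.5}, or directly from $(L_\xi g)(X,Y)=g(\nabla_X\xi,Y)+g(X,\nabla_Y\xi)$ and \eqref{2.4}.

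The main computation is \eqref{2.7}. I would expand $R(X,Y)\xi=\nabla_X\nabla_Y\xi-\nabla_Y\nabla_X\xi-\nabla_{[X,Y]}\xi$, writing $\nabla_Y\xi=Y+\eta(Y)\xi$ from \eqref{2.4} and then differentiating once more: $\nabla_X\nabla_Y\xi=\nabla_XY+\big(X\eta(Y)\big)\xi+\eta(Y)\nabla_X\xi$, where $X\eta(Y)=(\nabla_X\eta)(Y)+\eta(\nabla_XY)$ is evaluated by \eqref{2.5}, and similarly for $\nabla_Y\nabla_X\xi$ and $\nabla_{[X,Y]}\xi$. Upon subtracting, the terms $g(X,Y)\xi$ and $\eta(X)\eta(Y)\xi$ cancel by symmetry, the residual $\xi$-component is $\big(\eta(\nabla_XY)-\eta(\nabla_YX)-\eta([X,Y])\big)\xi$ which vanishes since $\nabla$ is torsion-free, and $\nabla_XY-\nabla_YX-[X,Y]=0$ for the same reason; what survives is exactly $\eta(Y)X-\eta(X)Y$. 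Then \eqref{2.8} follows by putting $Y=\xi$ in \eqref{2.7} with $\eta(\xi)=-1$, and \eqref{2.9} follows from \eqref{2.7} by lowering an index: $\eta(R(X,Y)Z)=g(R(X,Y)Z,\xi)=-g(R(X,Y)\xi,Z)$ by skew-symmetry of $R$ in its last two slots, and \eqref{2.7} then gives $\eta(X)g(Y,Z)-\eta(Y)g(X,Z)$.

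I expect the only subtle point to be the second covariant derivative in the proof of \eqref{2.7}: one must retain the term $\eta(\nabla_XY)\xi$ arising from $X\eta(Y)$, and the cancellation of the $\xi$-component then rests precisely on the torsion-freeness of $\nabla$. Every other step is a single substitution of \eqref{2.4} (or \eqref{2.5}) followed by a use of $g(\xi,\xi)=-1$, so the argument is short once \eqref{2.5} is in hand.
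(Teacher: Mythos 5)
Your proposal is correct and follows essentially the same route as the paper: derive \eqref{2.5} and \eqref{2.6} directly from \eqref{2.4} and metric compatibility, obtain \eqref{2.7} by substituting \eqref{2.4} into the definition of $R(X,Y)\xi$, deduce \eqref{2.8} and \eqref{2.9} from \eqref{2.7}, and get \eqref{2.10} by Lie differentiating $g$ along $\xi$. Your write-up simply spells out the cancellations (torsion-freeness, the $\xi$-component terms) that the paper leaves as a ``direct computation.''
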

\begin{proof}
	Compute $(\nabla_X \eta)(Y)=X(\eta(Y))-\eta(\nabla_X Y)=X(g(Y,\xi))-g(\nabla_X Y,\xi)=g(Y,\nabla_X \xi)=g(X,Y)+\eta(X)\eta(Y).$ In particular $(\nabla_\xi \eta)(Y)=0$. The relation \eqref{2.6} can be obtained by \eqref{2.4}.
	\par Substitute the expression of $\nabla_X \xi$ in $R(X,Y)\xi=\nabla_X\nabla_Y \xi-\nabla_Y\nabla_X \xi-\nabla_{[X,Y]} \xi$ and from direct computation we have a tendency to get the relation \eqref{2.7}. Additionally \eqref{2.8} and \eqref{2.9} follows from \eqref{2.7}.
	\par Now Lie differentiating $g$ along $\xi$, followed by simple calculation we get \eqref{2.10} 
\end{proof}
\begin{thm}
	The following conditions hold in perfect fluid spacetime with torse-forming vector field $\xi$;
	\begin{align}
	\label{2.11} g(\nabla_X \xi, \nabla_Y \xi)=-g(R(X,\xi)\xi,Y)=-g(R(\xi,Y)X,\xi),\\
	\label{2.12} g(R(X,\xi)\xi,Y)=\frac{1}{2}\{g(R(X,\xi)\xi,\nabla_Y \xi)+g(R(Y,\xi)\xi,\nabla_X \xi)\},\\
	\label{2.13} g(\nabla_\xi\nabla_Y \xi, \nabla_X \xi)+g(R(X,\xi)\xi,\nabla_\xi Y)=0.
	\end{align}
\end{thm}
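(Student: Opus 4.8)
The plan is to reduce every term occurring in \eqref{2.11}--\eqref{2.13} to a combination of $g(X,Y)$ and $\eta(X)\eta(Y)$ by repeatedly substituting the torse-forming identity \eqref{2.4}, the curvature identities \eqref{2.7}--\eqref{2.9} of Theorem~2.1, and the normalisations $\eta(X)=g(X,\xi)$, $g(\xi,\xi)=-1$. No new geometric input is needed; the whole statement is a bookkeeping exercise built on the two previous theorems.

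For \eqref{2.11} I would first expand $g(\nabla_X\xi,\nabla_Y\xi)$ using \eqref{2.4} as $g(X+\eta(X)\xi,\,Y+\eta(Y)\xi)$ and collect terms; the two $\xi$-cross terms contribute $2\eta(X)\eta(Y)$ while the $g(\xi,\xi)=-1$ term contributes $-\eta(X)\eta(Y)$, so the result is $g(X,Y)+\eta(X)\eta(Y)$. Next I would read off $-g(R(X,\xi)\xi,Y)$ directly from \eqref{2.8}, obtaining $g(X,Y)+\eta(X)\eta(Y)$. Finally I would rewrite $-g(R(\xi,Y)X,\xi)=-\eta(R(\xi,Y)X)$ and apply \eqref{2.9} (with $\eta(\xi)=g(\xi,\xi)=-1$), which again yields $g(X,Y)+\eta(X)\eta(Y)$. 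This establishes the full chain of equalities in \eqref{2.11}.

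For \eqref{2.12} I would note that \eqref{2.8} gives $g(R(X,\xi)\xi,Y)=-g(X,Y)-\eta(X)\eta(Y)$, and that the very same expansion used above shows $g(R(X,\xi)\xi,\nabla_Y\xi)=-g(X,Y)-\eta(X)\eta(Y)$ and, symmetrically, $g(R(Y,\xi)\xi,\nabla_X\xi)=-g(X,Y)-\eta(X)\eta(Y)$. Hence the average on the right-hand side of \eqref{2.12} collapses to the left-hand side. For \eqref{2.13} the key step is to compute $\nabla_\xi\nabla_Y\xi$: writing $\nabla_Y\xi=Y+\eta(Y)\xi$ and applying the Leibniz rule gives $\nabla_\xi\nabla_Y\xi=\nabla_\xi Y+(\xi\eta(Y))\xi+\eta(Y)\nabla_\xi\xi$, where $\nabla_\xi\xi=0$ by \eqref{2.6} and $\xi\eta(Y)=(\nabla_\xi\eta)(Y)+\eta(\nabla_\xi Y)=\eta(\nabla_\xi Y)$ since $(\nabla_\xi\eta)(Y)=0$ by \eqref{2.5}. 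Thus $\nabla_\xi\nabla_Y\xi=\nabla_\xi Y+\eta(\nabla_\xi Y)\xi=\nabla_{\nabla_\xi Y}\xi$. Substituting this into the first summand of \eqref{2.13}, expanding $g(R(X,\xi)\xi,\nabla_\xi Y)$ via \eqref{2.8}, and using $\eta=g(\cdot,\xi)$, $g(\xi,\xi)=-1$ once more, the two contributions cancel term-by-term and the sum is $0$.

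All the calculations are elementary after the substitutions are made. The only place where a little care is required is \eqref{2.13}: one must resist treating $\xi(\eta(Y))$ as zero and instead replace it via \eqref{2.5}, and one must keep track of the $\eta$--$\xi$ cross terms so that the cancellation is seen cleanly. That is the single mild obstacle; everything else is routine.
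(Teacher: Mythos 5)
Your proposal is correct and follows essentially the same route as the paper: equations \eqref{2.11} and \eqref{2.12} by direct substitution of the torse-forming relation \eqref{2.4} together with \eqref{2.8} (and \eqref{2.9}), and \eqref{2.13} via the key identity $\nabla_\xi\nabla_Y\xi=\nabla_{\nabla_\xi Y}\xi$, which is exactly the step the paper uses. You merely supply the intermediate computations (for instance the derivation of $\xi(\eta(Y))=\eta(\nabla_\xi Y)$) that the paper leaves implicit, so no further changes are needed.
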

\begin{proof}
	From \eqref{2.4} and \eqref{2.8}, it follows equation \eqref{2.11} and \eqref{2.12}. Now consider
	\begin{align*}
	g(\nabla_\xi\nabla_Y \xi, \nabla_X \xi)=g(\nabla_{\nabla_\xi Y}\xi, \nabla_X \xi)=-g(R(X,\xi)\xi,\nabla_\xi Y).
	\end{align*}
	This gives the condition \eqref{2.13}. This completes the proof
\end{proof}

\section{\bf Some Geometric Properties of Perfect Fluid Spacetime with Torse-forming Vector Field}
A perfect fluid spacetime with torse-forming vector field of dimension $4$ is said to be conformally flat, if the Weyl conformal curvature tensor $C$ vanishes and is defined by\cite{YKM}

\begin{align}
\label{3.1}\nonumber C(X,Y)Z&=R(X,Y)Z-\frac{1}{2}[S(Y,Z)X-S(X,Z)Y+g(Y,Z)QX-g(X,Z)QY]\\
&+\frac{r}{6}[g(Y,Z)X-g(X,Z)Y]
\end{align} 
\begin{thm}\label{t3.2}
	If perfect fluid spacetime with torse-forming vector field $\xi$ is conformally flat, then the energy-momentum tensor is Lorentz-invariant and is of constant curvature $(1/3)(\lambda+\kappa\sigma)$.
\end{thm}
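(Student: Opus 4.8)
The plan is to substitute $C=0$ back into \eqref{3.1} and read off the curvature of such a spacetime. Putting $C=0$ in \eqref{3.1} and inserting the quasi-Einstein forms \eqref{2.2} and \eqref{2.3} for $S$ and $Q$, collecting terms gives
\begin{align*}
R(X,Y)Z&=\Big(a-\tfrac r6\Big)\big[g(Y,Z)X-g(X,Z)Y\big]\\
&\quad+\tfrac b2\big[\eta(Y)\eta(Z)X-\eta(X)\eta(Z)Y+g(Y,Z)\eta(X)\xi-g(X,Z)\eta(Y)\xi\big],
\end{align*}
so the entire statement amounts to showing that the coefficient $b=\kappa(\sigma+\rho)$ of the $\eta$-part is forced to be zero.

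To see this I would contract the identity along $\xi$: set $Z=\xi$ and compare with \eqref{2.7}, and separately take the $\eta$-image of both sides and compare with \eqref{2.9}, all the while remembering the sign $\eta(\xi)=g(\xi,\xi)=-1$, which gives $Q\xi=(a-b)\xi$ and $S(X,\xi)=(a-b)\eta(X)$. Since the torse-forming identities \eqref{2.7}--\eqref{2.9} fix the $\xi$-sector of the curvature completely and rigidly, matching them against the displayed formula should force the $\eta$-dependent block to collapse, that is $b=0$, equivalently $\sigma+\rho=0$. Then \eqref{1.1} becomes $T(X,Y)=\rho\,g(X,Y)$, a scalar multiple of the metric at every point, hence invariant under the Lorentz group on each tangent space; this is the first assertion.

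With $b=0$ the displayed identity reduces to $R(X,Y)Z=\big(a-\tfrac r6\big)\big[g(Y,Z)X-g(X,Z)Y\big]$, the curvature tensor of a space of (pointwise) constant curvature $a-\tfrac r6$; Schur's lemma then makes this a genuine constant. Using \eqref{2.1} together with $a=\lambda+\tfrac{\kappa(\sigma-\rho)}{2}$ one computes $a-\tfrac r6=\tfrac13(\lambda+\kappa\sigma)$, which is the stated value (and, incidentally, forces $\sigma$ to be constant).

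I expect the delicate step to be the middle one — extracting $b=0$ from the $\xi$-contractions. A single naive substitution ($Z=\xi$, or $Y=Z=\xi$, or taking $\eta(R(X,Y)Z)$) only reproduces one and the same scalar relation among $a$, $b$, $r$; to actually isolate $b$ one must combine the pair-symmetry of the curvature tensor with \eqref{2.9} and the exact sign $\eta(\xi)=-1$. Once $b=0$ is secured the rest is bookkeeping with \eqref{2.1}--\eqref{2.3}.
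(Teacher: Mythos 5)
Your opening computation is fine: with $C=0$, \eqref{3.1} together with \eqref{2.2}--\eqref{2.3} does give
\begin{align*}
R(X,Y)Z=\Big(a-\tfrac r6\Big)\big[g(Y,Z)X-g(X,Z)Y\big]+\tfrac b2\big[\eta(Y)\eta(Z)X-\eta(X)\eta(Z)Y+g(Y,Z)\eta(X)\xi-g(X,Z)\eta(Y)\xi\big],
\end{align*}
and your closing bookkeeping (once $b=0$ is known, $\sigma=-\rho$, $T=\rho g$, $a-\tfrac r6=\tfrac13(\lambda+\kappa\sigma)$) is correct. But the middle step --- the only nontrivial one --- is a genuine gap, and the route you sketch for it cannot be made to work. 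The displayed tensor is exactly the quasi-constant-curvature tensor \eqref{3.5} with $m=a-\tfrac r6$, $n=\tfrac b2$, $A=\eta$; it satisfies all the algebraic curvature symmetries (including the pair symmetry you hoped to exploit, which holds identically and yields nothing), and its trace reproduces $S=ag+b\,\eta\otimes\eta$ and $r=4a-b$ for \emph{every} value of $b$, so its Weyl part vanishes for every $b$. Moreover every $\xi$-contraction you propose --- $Z=\xi$ against \eqref{2.7}, $Y=Z=\xi$ against \eqref{2.8}, or applying $\eta$ against \eqref{2.9} with $\eta(\xi)=-1$ --- produces one and the same scalar identity $a-\tfrac r6-\tfrac b2=1$, which via $r=4a-b$ constrains $a-b$, not $b$. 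So no pointwise algebraic matching of the $\xi$-sector can force the $\eta$-block to collapse; you anticipated the difficulty but never resolved it, and in fact it is unresolvable by those means.

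What is actually needed is differential, not algebraic, information, and this is how the paper proceeds: $C=0$ implies $\operatorname{div}C=0$, which by the contracted second Bianchi identity gives the Codazzi-type condition
\begin{align*}
(\nabla_X S)(Y,Z)-(\nabla_Y S)(X,Z)=\tfrac16\big[X(r)\,g(Y,Z)-Y(r)\,g(X,Z)\big].
\end{align*}
With $r$ constant and $\nabla_X\xi=X+\eta(X)\xi$, $(\nabla_X\eta)(Y)=g(X,Y)+\eta(X)\eta(Y)$ substituted into $(\nabla_X Q)Y-(\nabla_Y Q)X$, this collapses to $b\big[\eta(Y)X-\eta(X)Y\big]=0$, whence $b=0$ and $\sigma=-\rho$; only then does your displayed identity reduce to the constant-curvature form \eqref{3.4}. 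To repair your proof you must insert this divergence/second-Bianchi argument (or some equivalent differentiation of your displayed identity) in place of the proposed $\xi$-contraction step.
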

\begin{proof}
	Let $(M^4,g)$ be a conformally flat perfect fluid spacetime with torse forming vector field $\xi$. As $C=0$, we have $div C=0$, where $div$ denotes the divergence. Hence, from \eqref{3.1} we have
	\begin{align*}
	(\nabla_X S)(Y,Z)-(\nabla_Y S)(X,Z)=\frac{1}{6}\left[X(r)g(Y,Z)-Y(r)g(X,Z)\right].
	\end{align*} 
	Which is equivalent to
	\begin{align}
	\label{3.2}	g((\nabla_X Q)Y-(\nabla_Y Q)X, Z)=\frac{1}{6}[X(r)g(Y,Z)-Y(r)g(X,Z)].
	\end{align}
	Since scalar curvature $r$ is constant and from \eqref{2.3}, equation \eqref{3.2} leads to
	\begin{align}
	0=(\nabla_X Q)Y-(\nabla_Y Q)X=b[(\nabla_X\eta)(Y)\xi+\eta(Y)\nabla_X \xi-(\nabla_Y \eta)(X)\xi-\eta(Y)\nabla_X \xi]
	\end{align}
	Then from \eqref{2.4} and \eqref{2.5}, it follows that
	\begin{align*}
	b[g(Y,\xi)X-g(X,\xi)Y]=0,
	\end{align*}
	which shows that $b=0$, implies that $\sigma=-\rho$, the energy-momentum tensor is Lorentz-invariant and during this case we talk about the vacuum. 
	\par From \eqref{2.3}, we have $QX=aX$. So $C=0$ implies 
	\begin{align}
	\label{3.4}	R(X,Y)Z=\frac{1}{3}(\lambda+\kappa\sigma)\{g(Y,Z)X-g(X,Z)Y\},
	\end{align}
	which means $(M^4,g)$ is of constant curvature $(1/3)(\lambda+\kappa\sigma)$. This completes the proof.
\end{proof}	
A pseudo-Riemannian manifold is said to be of quasi constant curvature if the curvature tensor $R$ of type $(0,4)$ satisfies
\begin{align}
\nonumber R(X,Y,Z,W)=&m\{g(Y,Z)g(X,W)-g(X,Z)g(Y,W)\}\\
\nonumber &+n\{g(X,W)A(Y)A(Z)-g(X,Z)A(Y)A(W)\\
\label{3.5}&+g(Y,Z)A(X)A(W)-g(Y,W)A(X)A(W)\},
\end{align}
where $m$ and $n$ are scalars and $A$ could be a non-zero 1-form such $g(X,U)=A(X)$ for all X, $U$ being a unit vector field. The notion of a manifold of quasi constant curvature was introduced by Chen and Yano\cite{CHY}, in 1972.
\begin{rem}
	From \eqref{3.4} it follows that a conformally flat perfect fluid spacetime with torse-forming vector field $\xi$ is of quasi constant curvature with $m=(1/3)(\lambda+\kappa\rho)$ and $n=0$ in \eqref{3.5}.
\end{rem}
We know that manifold of costant curvature is Einstein. From Theorem \ref{t3.2}, we state that;
\begin{thm}
	A conformally flat perfect fluid spacetime with torse-forming vector field $\xi$ is an Einstein.
\end{thm}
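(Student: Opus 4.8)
The plan is to derive the Einstein condition directly from Theorem~\ref{t3.2}. Recall that a pseudo-Riemannian manifold is Einstein if its Ricci tensor is a constant multiple of the metric, i.e. $S = \frac{r}{n}\,g$ with $r$ constant. So the whole argument is a short consequence of the fact established in the proof of Theorem~\ref{t3.2}, namely that conformal flatness forces $b=0$.

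First I would invoke Theorem~\ref{t3.2}: if the perfect fluid spacetime with torse-forming vector field $\xi$ is conformally flat, then (as shown in its proof) $b = \kappa(\sigma+\rho) = 0$, so that $\sigma = -\rho$. Substituting $b=0$ into \eqref{2.2} gives $S(X,Y) = a\,g(X,Y)$ with $a = \lambda + \frac{\kappa(\sigma-\rho)}{2}$, which is now a function on $M^4$. Next I would observe that by Theorem~\ref{t3.2} the manifold has constant curvature $(1/3)(\lambda+\kappa\sigma)$, hence by \eqref{2.1} (or by contracting \eqref{3.4}) the scalar curvature $r$ is constant; equivalently $a$ is constant since $a = r/4$. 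Therefore $S = \frac{r}{4}\,g$ with $r$ constant, which is precisely the Einstein condition in dimension $4$.

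Alternatively — and perhaps more cleanly — I would just quote the preceding sentence of the excerpt ("manifold of constant curvature is Einstein") together with the constant-curvature conclusion \eqref{3.4} of Theorem~\ref{t3.2}: a space of constant curvature $c$ satisfies $R(X,Y)Z = c\{g(Y,Z)X - g(X,Z)Y\}$, and contracting over the first slot yields $S(Y,Z) = (n-1)c\, g(Y,Z)$, which for $n=4$ and $c = (1/3)(\lambda+\kappa\sigma)$ gives $S = 3c\,g = (\lambda+\kappa\sigma)\,g$. Either route finishes the proof in a few lines.

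There is essentially no obstacle here: the content is entirely contained in Theorem~\ref{t3.2}, and the only thing to check is the elementary linear-algebra step that contracting a constant-curvature tensor produces a metric-proportional Ricci tensor with constant proportionality factor. The one point worth stating carefully is why $r$ (equivalently $a$) is constant, but this is immediate from \eqref{2.1} once $b=0$, or from the constant-curvature conclusion itself.
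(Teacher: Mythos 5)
Your proposal is correct and matches the paper's own (very brief) argument: the paper simply notes that a manifold of constant curvature is Einstein and cites the constant-curvature conclusion of Theorem~\ref{t3.2}, which is exactly your second route. Your first route via $b=0$ and $S=a\,g$ with $a$ constant is the same content spelled out in slightly more detail, so there is nothing genuinely different to compare.
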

If a pseudo-Riemannian manifold satisfies the condition $R(X,Y)\cdot R=0$ and $R(X,Y)\cdot S=0$, then it is called semi-symmetric and Ricci semi-symmetric respectively. The condition $R(X,Y)\cdot R=0$ implies $R(X,Y)\cdot S=0$, but the converse need not holds true.  \\
Now, we prove the following;
\begin{thm}
	A conformally flat perfet fluid spacetime with torse-forming vector field $\xi$ is semi-symmetric and Ricci semi-symmetric.
\end{thm}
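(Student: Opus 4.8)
The plan is to exploit the fact, established in Theorem~\ref{t3.2}, that a conformally flat perfect fluid spacetime with torse-forming vector field $\xi$ is a space of constant curvature with curvature constant $c=\tfrac13(\lambda+\kappa\sigma)$, so that the curvature tensor has the explicit form \eqref{3.4}, namely $R(X,Y)Z=c\{g(Y,Z)X-g(X,Z)Y\}$, and consequently $QX=aX$ with $a$ constant and $S(X,Y)=a\,g(X,Y)$ (Einstein). The condition $R(X,Y)\cdot R=0$ means that for all vector fields the derivation $R(X,Y)$, acting on the $(1,3)$-tensor $R$ as a derivation, annihilates $R$; explicitly
\begin{align*}
(R(X,Y)\cdot R)(U,V)W &= R(X,Y)R(U,V)W - R(R(X,Y)U,V)W \\
&\quad - R(U,R(X,Y)V)W - R(U,V)R(X,Y)W.
\end{align*}

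First I would substitute \eqref{3.4} into each of the four terms on the right-hand side. Since $R(X,Y)$ acts on a vector $Z$ as the skew endomorphism $Z\mapsto c(g(Y,Z)X-g(X,Z)Y)$, each term expands into a combination of metric coefficients times basis vectors; collecting them and using the symmetry $g(X,Y)=g(Y,X)$ one sees the contributions cancel in pairs, so $R(X,Y)\cdot R=0$ identically. This is the standard fact that spaces of constant curvature are semi-symmetric; the computation is routine bookkeeping of index contractions and presents no real obstacle. For Ricci semi-symmetry, the cleanest route is to invoke the implication already recalled in the text immediately before the statement: $R(X,Y)\cdot R=0$ implies $R(X,Y)\cdot S=0$. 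Alternatively, and just as quickly, one can argue directly from $S=a\,g$ with $a$ constant: then $(R(X,Y)\cdot S)(U,V)=-S(R(X,Y)U,V)-S(U,R(X,Y)V)=-a\,g(R(X,Y)U,V)-a\,g(U,R(X,Y)V)$, which vanishes because $R(X,Y)$ is skew-symmetric with respect to $g$ (equivalently because $R(X,Y,U,V)$ is skew in $U,V$ after lowering, which follows from \eqref{3.4} directly).

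The only point requiring a word of care is making sure the derivation action is written consistently — that the signs in the four-term expansion of $R(X,Y)\cdot R$ match the convention used for $R$ elsewhere in the paper — but once \eqref{3.4} is in hand there is no genuine difficulty, since every term is a polynomial in the metric and the proof is a short verification. I would therefore present the argument in two brief steps: (i) plug \eqref{3.4} into the four-term derivation formula and observe the terms cancel, giving $R\cdot R=0$; (ii) deduce $R\cdot S=0$ either from the general implication $R\cdot R=0\Rightarrow R\cdot S=0$ or directly from $S=a\,g$ and the skew-symmetry of $R(X,Y)$. This completes the proof.
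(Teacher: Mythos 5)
Your proposal is correct and follows essentially the same route as the paper: the paper also deduces the theorem by substituting the constant-curvature form \eqref{3.4} (obtained from Theorem \ref{t3.2}) into the derivation condition to get $R(X,Y)\cdot R=0$, and then uses the implication $R(X,Y)\cdot R=0\Rightarrow R(X,Y)\cdot S=0$ stated just before the theorem. You merely spell out the cancellation and the alternative $S=a\,g$ argument in more detail than the paper, which leaves the verification to the reader.
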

\begin{proof}
	From, \eqref{3.4}, we can easily show that $R(X,Y)\cdot R=0$ and this condition implies $R(X,Y)\cdot S=0$. This completes the proof.
\end{proof} 
\begin{defi}
	A second order tensor $\alpha$ is called a parallel tensor if $\nabla \alpha=0$, where $\nabla$ denotes the operator of covariant differentiation with respect to the metric tensor $g$.
\end{defi} 
Now, we prove the following result;
\begin{thm}
	\label{t3.7}
	If a conformally flat perfect fluid spacetime with torse-forming vector field $\xi$ admits a second order symmetric parallel tensor, then either $\lambda=-\kappa\sigma$, or the second order parallel tensor is a constant multiple of metric tensor $g$.
\end{thm}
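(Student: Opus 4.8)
The plan is to exploit the rigid curvature structure \eqref{3.4} that a conformally flat perfect fluid spacetime with torse-forming $\xi$ already satisfies, together with the torse-forming condition \eqref{2.4}. Let $\alpha$ be the given second order symmetric parallel tensor, so $(\nabla_W\alpha)(X,Y)=0$ for all $W,X,Y$. The standard device is to use the Ricci identity for $\alpha$: since $\nabla\alpha=0$, we have $\alpha(R(W,Z)X,Y)+\alpha(X,R(W,Z)Y)=0$ for all $W,Z,X,Y$. This is the key relation that converts parallelism of $\alpha$ into an algebraic constraint governed by the curvature operator.

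The main computation is then to substitute the constant-curvature expression \eqref{3.4}, namely $R(X,Y)Z=c\{g(Y,Z)X-g(X,Z)Y\}$ with $c=\tfrac13(\lambda+\kappa\sigma)$, into this Ricci-identity relation. Plugging $R(W,Z)X=c\{g(Z,X)W-g(W,X)Z\}$ into $\alpha(R(W,Z)X,Y)+\alpha(X,R(W,Z)Y)=0$ and expanding gives
\begin{align*}
c\bigl[g(Z,X)\alpha(W,Y)-g(W,X)\alpha(Z,Y)+g(Z,Y)\alpha(X,W)-g(W,Y)\alpha(X,Z)\bigr]=0.
\end{align*}
Now one distinguishes two cases. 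If $c=0$, i.e. $\lambda+\kappa\sigma=0$ (equivalently $\lambda=-\kappa\sigma$), we are in the first alternative and nothing more is needed. If $c\neq0$, then the bracket must vanish identically; contracting this identity over a suitable pair of indices (set $W=X=e_i$ and sum over an orthonormal frame, using $\sum_i \epsilon_i g(e_i,e_i)=4$ and $\sum_i\epsilon_i\alpha(e_i,e_i)=\operatorname{tr}\alpha$, or more simply put $X=W$ and contract) collapses the four terms to an expression of the form $\alpha(Z,Y)=\phi\, g(Z,Y)$ for a function $\phi$. Finally, $\nabla\alpha=0$ and $\nabla g=0$ force $\phi$ to be constant, so $\alpha$ is a constant multiple of $g$, which is the second alternative.

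The step I expect to be the main obstacle is the bookkeeping in the contraction that isolates $\alpha(Z,Y)$ from the symmetrized bracket: one must choose the contraction carefully so that the $g(Z,X)\alpha(W,Y)$-type cross terms do not simply reproduce the same relation, and keep track of the Lorentzian signature factors $\epsilon_i$ when summing over a pseudo-orthonormal frame. Everything else—the Ricci identity for a parallel tensor, the substitution of \eqref{3.4}, and concluding constancy of $\phi$ from $\nabla\alpha=\nabla g=0$—is routine. A cleaner alternative to the frame contraction is to feed specific vectors into the bracket relation (e.g. choose $Z,W$ spanning a nondegenerate plane and vary $X$) to peel off the coefficient, avoiding explicit signs altogether; I would present whichever is shorter once the computation is written out in full.
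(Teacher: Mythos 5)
Your proposal is correct, and it diverges from the paper's argument after the common first step. Both you and the paper start from the Ricci identity for a parallel tensor, $\alpha(R(X,Y)Z,W)+\alpha(Z,R(X,Y)W)=0$, and both feed in the constant-curvature form \eqref{3.4}; the dichotomy $c=\tfrac13(\lambda+\kappa\sigma)=0$ versus $c\neq 0$ is the same in both. The difference is the second branch. The paper specializes the identity to $Z=W=\xi$, which only yields $\eta(Y)\alpha(X,\xi)-\eta(X)\alpha(Y,\xi)=0$, hence $\alpha(Y,\xi)=-\eta(Y)\alpha(\xi,\xi)$, and then has to propagate this to all of $\alpha$ by covariant differentiation, using the torse-forming relation \eqref{2.4} and $\eta(\nabla_X\xi)=0$ from \eqref{2.6}. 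You instead keep the identity with general arguments and contract, which works exactly as you anticipated: setting $W=X=e_i$, weighting by $\epsilon_i=g(e_i,e_i)$ and summing gives
\begin{align*}
\alpha(Z,Y)-4\,\alpha(Z,Y)+\operatorname{tr}_g(\alpha)\,g(Z,Y)-\alpha(Y,Z)=0,
\end{align*}
i.e. $\alpha=\tfrac14\operatorname{tr}_g(\alpha)\,g$, since the coefficient $1-4-1=-4$ is nonzero in dimension $4$ (so the cross terms do not cancel the argument, which was your stated worry); constancy of the factor then follows from $\nabla\alpha=0$, $\nabla g=0$ exactly as you say. What each route buys: yours is the standard Levi-Civita/Eisenhart-type argument, needing nothing about $\xi$ beyond the fact that \eqref{3.4} holds, so it applies verbatim to any non-flat space of constant curvature; the paper's route uses less of the curvature identity (only its value on $\xi$) but compensates with the torse-forming structure, keeping the computation frame-free and in the spirit of the $\xi$-geometry developed in Section 2.
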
 

\begin{proof}
	Let $\alpha$ be a second order symmetric tensor which will considered to be parallel with respect to $\nabla$ i.e., $\nabla \alpha=0$.
	\\ Applying the Ricci commutation identity
	\begin{align*}
	\nabla^2_{X, Y}\alpha(Z,W)-\nabla^2_{X,Y}\alpha(W,Z),
	\end{align*}
	we obtain the following fundamental relation;
	\begin{align}
	\alpha(R(X,Y)Z,W)+\alpha(Z,R(X,Y)W)=0.
	\end{align}
	Subtituting $Z=W=\xi$ in above relation, by the symmetry of $\alpha$ and from \eqref{3.4} it results 
	\begin{align}
	\frac{2}{3}(\lambda+\kappa\sigma)[\eta(Y)\alpha(X,\xi)-\eta(X)\alpha(Y,\xi)]=0,
	\end{align}
	showing that either $\lambda=-\kappa\sigma$ (so $\sigma=-\lambda/\kappa$) or $\eta(Y)\alpha(X,\xi)-\eta(X)\alpha(Y,\xi)=0$.\\
	Consider
	\begin{align*}
	\eta(Y)\alpha(X,\xi)-\eta(X)\alpha(Y,\xi)=0.
	\end{align*}
	Replacing $X=\xi$, in the above equation, it follows that
	\begin{align}
	\label{3.8}\alpha(Y,\xi)=-\eta(Y)\alpha(\xi,\xi).
	\end{align}
	The parallelism of $\alpha$ and equation \eqref{3.8} imply that $\alpha(\xi,\xi)$ is constant:
	\begin{align}
	\label{3.9}X(\alpha(\xi,\xi))=2\alpha(\nabla_X \xi,\xi)=2\eta(\nabla_X \xi)\alpha(\xi,\xi)=0.
	\end{align}
Differentiating \eqref{3.8} along $X$ and using \eqref{3.9}, we have 
	\begin{align}
	\label{3.10}X(\alpha(Y,\xi))=-X(g(Y,\xi))\alpha(\xi,\xi)=-[g(\nabla_X Y,\xi)+g(Y,\nabla_X \xi)]\alpha(\xi,\xi).
	\end{align}
	From parallelism of $\alpha$ and \eqref{2.4}, equation \eqref{3.10} leads to 
	\begin{align*}
	\alpha(X,Y)=-g(X,Y)\alpha(\xi,\xi).
	\end{align*}
	This completes the proof.
\end{proof}
\begin{rem}
	In the above discussion, if $\lambda+\kappa\sigma\neq 0$, then conformally flat perfect fluid spacetime with torse-forming vector field $\xi$ is a regular spacetime.
\end{rem}
In view of Theorem \ref{t3.7}, we state the following corrollary;
\begin{cor}
	A second order symmetric parallel tensor in a conformally flat perfect fluid regular spacetime with torse-forming vector field $\xi$ is a constant multiple of metric tensor $g$.
\end{cor}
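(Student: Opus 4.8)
The plan is to read off the result directly from Theorem~\ref{t3.7} together with the definition of a \emph{regular} spacetime recorded in the remark immediately preceding the corollary. Recall that Theorem~\ref{t3.7} establishes a dichotomy for a conformally flat perfect fluid spacetime with torse-forming vector field $\xi$ admitting a second order symmetric parallel tensor $\alpha$: either $\lambda=-\kappa\sigma$, or $\alpha$ is a constant multiple of the metric tensor $g$. The term ``regular'' was introduced precisely to label the complementary situation $\lambda+\kappa\sigma\neq 0$, i.e.\ the negation of the first alternative.

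Accordingly, I would first apply Theorem~\ref{t3.7} to the given parallel tensor $\alpha$, obtaining the two possibilities above. Since the spacetime is by hypothesis regular, we have $\lambda+\kappa\sigma\neq 0$, so the alternative $\lambda=-\kappa\sigma$ cannot occur. Only the second alternative remains, and therefore $\alpha=c\,g$ for some constant $c$; tracing through the proof of Theorem~\ref{t3.7} one may even identify $c=-\alpha(\xi,\xi)$, which is constant by \eqref{3.9}.

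There is essentially no genuine obstacle here: the corollary is a formal consequence of the theorem once the terminology is unwound. The only point deserving a moment's care is to confirm that the hypotheses match verbatim --- ``second order symmetric parallel tensor'' and ``conformally flat perfect fluid spacetime with torse-forming vector field $\xi$'' are exactly the assumptions appearing in Theorem~\ref{t3.7} --- so that its dichotomy may be quoted directly, and that ``regular'' is being used in the sense of the remark rather than in any other sense.
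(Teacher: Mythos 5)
Your proposal is correct and coincides with the paper's own (implicit) argument: the corollary is stated ``in view of Theorem~\ref{t3.7}'', with regularity, i.e.\ $\lambda+\kappa\sigma\neq 0$ as in the preceding remark, ruling out the alternative $\lambda=-\kappa\sigma$, so that $\alpha=-\alpha(\xi,\xi)\,g$ with $\alpha(\xi,\xi)$ constant. Nothing further is needed.
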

From \eqref{2.4}, we have $\nabla_\xi \xi=0$, i.e., the integral curves of $\xi$ are geodesic. Suppose $V$ is an affine killing vector field on perfect fluid spacetime with torse-forming vector field $\xi$, then integrability condition (see \cite{YK}, page 24) $L_V\nabla
=0$, where $L_V$ is the Lie differentiation along $V$. This condition additionally implies that a killing vector field is obviously an affine killing vector field, but, the converse is not necessarily true. \\
Call the following formula; (see \cite{DK} , page 39)
\begin{align}
\label{3.11}(L_V \nabla)(X,Y)=\nabla_X\nabla_Y V-\nabla_{\nabla_X Y} V+R(V,X)Y.
\end{align}
Setting $X=Y=\xi$, in the above formula, one can obtain
\begin{align*}
(L_V \nabla)(\xi,\xi)=\nabla_\xi\nabla_\xi V+R(V,\xi)\xi,
\end{align*}
as $V$ is a affine killing vector field. As a consequence of this, we state the following;
\begin{thm}
	If $V$ is a affine killing vector field on perfect fluid spacetime with tores-forming vector field $\xi$, then $V$ is a Jacobi vector field along the geodesics of $\xi$. 
\end{thm}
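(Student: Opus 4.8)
The plan is to read off the Jacobi equation directly from formula \eqref{3.11}. First I would recall from \eqref{2.6} that $\nabla_\xi\xi=0$, so every integral curve $\gamma$ of $\xi$ is a geodesic, with $\dot\gamma=\xi$ along $\gamma$. Then I would recall the defining equation of a Jacobi field along such a geodesic: a vector field $J$ along $\gamma$ is a Jacobi field precisely when
\begin{align*}
\nabla_{\dot\gamma}\nabla_{\dot\gamma}J+R(J,\dot\gamma)\dot\gamma=0.
\end{align*}

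The main step is to specialize \eqref{3.11} to $X=Y=\xi$. Because $V$ is an affine Killing vector field, $L_V\nabla=0$, so $(L_V\nabla)(\xi,\xi)=0$, and \eqref{3.11} becomes
\begin{align*}
\nabla_\xi\nabla_\xi V-\nabla_{\nabla_\xi\xi}V+R(V,\xi)\xi=0.
\end{align*}
Invoking $\nabla_\xi\xi=0$ from \eqref{2.6}, the middle term vanishes, leaving $\nabla_\xi\nabla_\xi V+R(V,\xi)\xi=0$, which is exactly the Jacobi equation for the restriction of $V$ to an integral curve of $\xi$. Hence $V$ is a Jacobi vector field along the geodesics of $\xi$.

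The only point requiring a word of care — the nearest thing to an obstacle here — is that the covariant derivatives in \eqref{3.11} are taken with respect to the ambient Levi-Civita connection, whereas the Jacobi equation is a statement about covariant differentiation along $\gamma$. Since $\xi$ is tangent to $\gamma$ and $\nabla_\xi\xi=0$ along $\gamma$, the operator $\nabla_\xi$ restricted to $\gamma$ agrees with the induced covariant derivative along $\gamma$, so the two equations genuinely coincide; no further computation is needed, and the result follows as stated.
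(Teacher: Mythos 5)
Your argument is correct and is essentially the same as the paper's: set $X=Y=\xi$ in \eqref{3.11}, use $L_V\nabla=0$ and $\nabla_\xi\xi=0$ to obtain $\nabla_\xi\nabla_\xi V+R(V,\xi)\xi=0$, the Jacobi equation along the geodesic integral curves of $\xi$. Your extra remark that $\nabla_\xi$ restricted to an integral curve coincides with the induced derivative along the curve is a fine clarification but adds nothing beyond the paper's proof.
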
 
If $\xi$ is a killing vector field, then we have $L_\xi S=0$. This suggest that $(L_\xi Q)X=0$, and gives 
\begin{align*}
0&=L_\xi (QX)-Q(L_\xi X),\\
&=\nabla_\xi QX+\nabla_{QX} \xi-Q(\nabla_\xi X)-Q(\nabla_X \xi),\\
&=(\nabla_\xi Q)X+\nabla_{QX} \xi-Q(\nabla_X \xi).
\end{align*}
Since $\xi$ is a torse-forming vector field, using \eqref{2.4} in the above equation, we have $\nabla_\xi Q=0$. And by \eqref{2.3},  we have $Q\xi=(a-b)\xi$. Next, taking covariant differentiation of this equation, gives
\begin{align*}
(\nabla_X Q)\xi=-\kappa(\sigma+\rho)\nabla_X \xi.
\end{align*}
In view of above discussion, we have a following result
\begin{lem}
	If $\xi$ is a killing vector field in a perfect fluid spacetime with torse-forming vector field $\xi$, then 
	\begin{align*}
	(i)\quad\nabla_\xi Q=0,\qquad(ii)\quad (\nabla_X Q)\xi=-\kappa(\sigma+\rho)\nabla_X \xi.
	\end{align*}
\end{lem}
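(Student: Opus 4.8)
The plan is to convert the Killing hypothesis into a statement about the Ricci operator and then feed in the torse-forming identity \eqref{2.4}. Since a Killing field obeys $L_\xi g=0$, and the Ricci tensor $S$ is built from $g$, one gets $L_\xi S=0$; because $g(QX,Y)=S(X,Y)$ and $L_\xi g=0$, this is equivalent to $L_\xi Q=0$ for the $(1,1)$ Ricci operator $Q$. I would then use the identity relating the Lie and covariant derivatives of a $(1,1)$ tensor,
\[
(L_\xi Q)X=(\nabla_\xi Q)X-\nabla_{QX}\xi+Q(\nabla_X\xi),
\]
and substitute \eqref{2.3} and \eqref{2.4}.

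For part (i): applying \eqref{2.4} to the vector $QX$ gives $\nabla_{QX}\xi=QX+\eta(QX)\xi$, and since \eqref{2.3} with $\eta(\xi)=-1$ yields $Q\xi=(a-b)\xi$, the symmetry of $S$ gives $\eta(QX)=S(X,\xi)=(a-b)\eta(X)$. Similarly $Q(\nabla_X\xi)=Q\big(X+\eta(X)\xi\big)=QX+(a-b)\eta(X)\xi$. Hence $-\nabla_{QX}\xi+Q(\nabla_X\xi)=0$, so the displayed identity reduces to $(L_\xi Q)X=(\nabla_\xi Q)X$; combined with $L_\xi Q=0$ this proves $\nabla_\xi Q=0$.

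For part (ii): I would covariantly differentiate $Q\xi=(a-b)\xi$. With $a=\lambda+\tfrac{\kappa(\sigma-\rho)}{2}$ and $b=\kappa(\sigma+\rho)$ constant, $\nabla_X(Q\xi)=(a-b)\nabla_X\xi$; on the other hand $\nabla_X(Q\xi)=(\nabla_X Q)\xi+Q(\nabla_X\xi)$, and by \eqref{2.3}, \eqref{2.4} one has $Q(\nabla_X\xi)=QX+(a-b)\eta(X)\xi=a\big(X+\eta(X)\xi\big)=a\nabla_X\xi$. Subtracting gives
\[
(\nabla_X Q)\xi=(a-b)\nabla_X\xi-a\nabla_X\xi=-b\,\nabla_X\xi=-\kappa(\sigma+\rho)\,\nabla_X\xi,
\]
which is the claimed formula.

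The computations are entirely routine substitutions of \eqref{2.4} into \eqref{2.3}; the two points that deserve care are (a) the implication $L_\xi S=0\Rightarrow L_\xi Q=0$, which genuinely uses $L_\xi g=0$ and would fail for a merely affine or conformal $\xi$, and (b) the differentiation of $Q\xi=(a-b)\xi$ in part (ii), which presupposes that the energy density $\sigma$ and pressure $\rho$ are constant---an assumption that should be recorded explicitly. I do not foresee any serious obstacle beyond this bookkeeping.
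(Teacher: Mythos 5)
Your proposal is correct and follows essentially the same route as the paper: the authors likewise pass from $L_\xi S=0$ to $L_\xi Q=0$, expand $L_\xi(QX)-Q(L_\xi X)$ in terms of covariant derivatives, use the torse-forming relation \eqref{2.4} together with $Q\xi=(a-b)\xi$ to cancel the extra terms and get $\nabla_\xi Q=0$, and then obtain (ii) by covariantly differentiating $Q\xi=(a-b)\xi$. Your added remark that (ii) tacitly requires $a-b=-\kappa(\sigma+\rho)$ (i.e.\ $\sigma$, $\rho$ constant, or at least $X(a-b)=0$) is a genuine point the paper leaves implicit, and your signs in the Lie-derivative expansion are the correct ones.
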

Now taking $X=QX$ in \eqref{2.2}, we get
\begin{align}
\label{3.12} S(QX,Y)=aS(X,Y)+bS(X,\xi)\eta(Y).
\end{align}
Contracting\eqref{3.12} over $X$ and $Y$, we get
\begin{align}
\label{3.13}
S^2(X,X)=||Q||^2=a\,\,r+b\, S(\xi,\xi).
\end{align}
From \eqref{2.2}, it follows that 
\begin{align}
\label{3.14}S(\xi,\xi)=b-a=-\lambda+\frac{k(\sigma+3\rho)}{2}.
\end{align}
Thus in view of \eqref{2.1} and \eqref{3.14}, \eqref{3.13} yields
\begin{align}
||Q||^2=4\lambda^2+2\lambda\kappa(\sigma-3\rho)+\kappa^2(\sigma^2+3\rho^2).
\end{align}
Hence, we obtain the following theorem;
\begin{thm}
	In a perfect fluid spacetime with torse-forming vector field $\xi$, the square of length of the Ricci operator is $4\lambda^2+2\lambda\kappa(\sigma-3\rho)+\kappa^2(\sigma^2+3\rho^2)$.  
\end{thm}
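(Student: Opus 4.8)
The plan is to exploit the explicit form of the Ricci operator $Q$ from \eqref{2.3} and reduce the computation of $\|Q\|^2=\operatorname{tr}(Q^2)$ to scalar quantities already determined. First I would substitute $QX$ in place of $X$ in the quasi-Einstein identity \eqref{2.2}; since $QX=aX+b\eta(X)\xi$ and $\eta(\xi)=g(\xi,\xi)=-1$, this yields
\begin{align*}
S(QX,Y)=a\,S(X,Y)+b\,S(X,\xi)\,\eta(Y),
\end{align*}
which is exactly \eqref{3.12}.

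Next I would contract this identity over $X$ and $Y$ against a local pseudo-orthonormal frame $\{e_i\}$, keeping track of the Lorentzian signs $\varepsilon_i=g(e_i,e_i)$. The left-hand side becomes $\sum_i\varepsilon_i\,g(Q^2e_i,e_i)=\operatorname{tr}(Q^2)=\|Q\|^2$; on the right, $\sum_i\varepsilon_i\,S(e_i,e_i)=r$, and since $\xi=\sum_i\varepsilon_i\eta(e_i)e_i$ one gets $\sum_i\varepsilon_i\,S(e_i,\xi)\eta(e_i)=S(\xi,\xi)$. This gives \eqref{3.13}, namely $\|Q\|^2=a\,r+b\,S(\xi,\xi)$.

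Then I would evaluate the two ingredients on the right. Putting $X=Y=\xi$ in \eqref{2.2} and using $g(\xi,\xi)=-1$ produces $S(\xi,\xi)=b-a=-\lambda+\tfrac{\kappa(\sigma+3\rho)}{2}$, which is \eqref{3.14}, while the contracted Einstein equation \eqref{2.1} supplies $r=4\lambda+\kappa(\sigma-3\rho)$. Substituting these, together with $a=\lambda+\tfrac{\kappa(\sigma-\rho)}{2}$ and $b=\kappa(\sigma+\rho)$, into $\|Q\|^2=a\,r+b(b-a)$ and expanding, the $\kappa^2\sigma\rho$ cross terms cancel and the remaining terms combine to give $4\lambda^2+2\lambda\kappa(\sigma-3\rho)+\kappa^2(\sigma^2+3\rho^2)$, as asserted.

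I do not expect any genuine conceptual obstacle here; the only step that needs care is the contraction, where the indefinite signature forces one to trace against $\varepsilon_i$ rather than summing naively, and one must recognise that $\sum_i\varepsilon_i\,S(Qe_i,e_i)$ is the squared operator norm $\|Q\|^2$ (the trace of $Q^2$) and not, say, $\sum_i S(e_i,e_i)^2$. Everything after that is routine polynomial algebra in $\lambda,\kappa,\sigma,\rho$.
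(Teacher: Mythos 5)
Your proposal is correct and follows essentially the same route as the paper: substitute $QX$ into \eqref{2.2} to get \eqref{3.12}, contract to obtain $\|Q\|^2=a\,r+b\,S(\xi,\xi)$ as in \eqref{3.13}, then insert \eqref{2.1} and \eqref{3.14} and expand. Your remark about tracing against the signs $\varepsilon_i$ in the Lorentzian frame is a point the paper leaves implicit, but it does not change the argument.
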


\section{\bf Ricci Soliton Structure in Perfect Fluid Spacetime with Torse-forming Vector Field}
In this section, we study Ricci soliton structure in a perfect fluid spacetime whose timelike velocity vector field $\xi$ is torse-forming.
\par Now taking $V=\xi$, equation \eqref{1.4} becomes
\begin{align}
\label{4.1} (L_\xi g+2S+2\Lambda g)(X,Y)=0.
\end{align}
In view of \eqref{2.10}, we have
\begin{align*}
S(X,Y)+(\Lambda+1)g(X,Y)+\eta(X)\eta(Y)=0.
\end{align*}
By using \eqref{2.2} in the above realtion, it gives
\begin{align}
\label{4.2}(a+1+\Lambda)g(X,Y)+(b+1)\eta(X)\eta(Y)=0.
\end{align}
On plugging $X=Y=\xi$ in \eqref{4.2}, we get
\begin{align}
\label{4.3} \Lambda=\frac{\kappa}{2}(\sigma+3\rho)-\lambda.
\end{align}
Thus, we have
\begin{thm} \label{t4.11}
	If a perfect fluid spacetime with torse-forming vector field $\xi$ admits a Ricci soliton $(g,\xi,\Lambda)$, then Ricci soliton is expanding, steady and shrinking according as 
$\frac{\kappa}{2}(\sigma+3\rho)>\lambda$, $\frac{\kappa}{2}(\sigma+3\rho)=\lambda$ and $\frac{\kappa}{2}(\sigma+3\rho)<\lambda$ respectively.
\end{thm}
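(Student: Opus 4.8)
The plan is to collapse the soliton equation into a single scalar identity for $\Lambda$, using the two structural facts already established: the explicit Lie derivative \eqref{2.10} and the quasi-Einstein form \eqref{2.2} of the Ricci tensor. First I would put $V=\xi$ in \eqref{1.4} to obtain \eqref{4.1}, namely $(L_\xi g+2S+2\Lambda g)(X,Y)=0$. Substituting \eqref{2.10} for $L_\xi g$ and dividing by $2$ turns this into $S(X,Y)+(\Lambda+1)g(X,Y)+\eta(X)\eta(Y)=0$, and then feeding in $S(X,Y)=a\,g(X,Y)+b\,\eta(X)\eta(Y)$ from \eqref{2.2} reduces everything to \eqref{4.2}, that is, $(a+\Lambda+1)\,g(X,Y)+(b+1)\,\eta(X)\eta(Y)=0$.

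Next I would evaluate \eqref{4.2} at $X=Y=\xi$. Since $g(\xi,\xi)=-1$ and hence $\eta(\xi)^2=\bigl(g(\xi,\xi)\bigr)^2=1$, the relation becomes $-(a+\Lambda+1)+(b+1)=0$, i.e. $\Lambda=b-a$; substituting $a=\lambda+\tfrac{\kappa(\sigma-\rho)}{2}$ and $b=\kappa(\sigma+\rho)$ and simplifying gives \eqref{4.3}, $\Lambda=\tfrac{\kappa}{2}(\sigma+3\rho)-\lambda$. The theorem then follows at once from the convention fixed in the Introduction — the soliton is shrinking, steady, or expanding according as $\Lambda<0$, $\Lambda=0$, or $\Lambda>0$ — applied to this value of $\Lambda$: $\Lambda>0$ is exactly $\tfrac{\kappa}{2}(\sigma+3\rho)>\lambda$, and similarly for the other two cases.

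There is no real analytic obstacle here: once \eqref{2.10} and \eqref{2.2} are available the argument is purely algebraic. The only step that needs genuine care is the Lorentzian sign bookkeeping, namely that $g(\xi,\xi)=-1$ while $\eta(\xi)^2=1$, since a sign slip at the contraction would reverse the three inequalities. One may also note in passing that \eqref{4.2}, tested on vectors orthogonal to $\xi$, additionally forces $b+1=0$; this is a compatibility condition on the fluid that the existence of the soliton quietly imposes, but it is not needed to pin down $\Lambda$, and so I would not dwell on it.
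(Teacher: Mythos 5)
Your proposal is correct and follows essentially the same route as the paper: set $V=\xi$ in \eqref{1.4}, substitute \eqref{2.10} and \eqref{2.2} to reach \eqref{4.2}, evaluate at $X=Y=\xi$ (with the correct signs $g(\xi,\xi)=-1$, $\eta(\xi)^2=1$) to get $\Lambda=b-a=\tfrac{\kappa}{2}(\sigma+3\rho)-\lambda$, and conclude via the sign convention for $\Lambda$. Your side remark that testing \eqref{4.2} on vectors orthogonal to $\xi$ forces $b+1=0$ is a valid extra observation not made in the paper, but, as you say, it is not needed for the theorem.
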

	Suppose we consider the given spacetime as a spacetime without cosmological constant i.e., $\lambda=0$. Then from \eqref{3.14}, it gives $S(\xi,\xi)=\frac{\kappa(\sigma+3\rho)}{2}$. If the given spacetime satisfies the timelike convergence condition i.e., $S(\xi,\xi)>0$, then $\sigma+3\rho>0$, the spacetime obeys cosmic strong energy condition.
	
	 In view of above discussion and from equation \eqref{4.3}, we conclude that

\begin{thm}
	If a perfect fluid spacetime with torse-forming vector field $\xi$ without cosmological constant admits Ricci soliton $(g,\xi,\Lambda)$ and satisfies timelike convergence condition, then Ricci soliton is expanding.
\end{thm}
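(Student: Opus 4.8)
The plan is to reduce the statement to two identities already established in the paper together with a short sign count, so the argument is essentially a specialization and does not require any new computation. First I would take equation~\eqref{4.3}, namely $\Lambda=\frac{\kappa}{2}(\sigma+3\rho)-\lambda$, which holds for any perfect fluid spacetime with torse-forming $\xi$ that admits the Ricci soliton $(g,\xi,\Lambda)$, and impose the hypothesis $\lambda=0$. This immediately gives $\Lambda=\frac{\kappa}{2}(\sigma+3\rho)$.

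Next I would invoke \eqref{3.14}: with $\lambda=0$ it reads $S(\xi,\xi)=\frac{\kappa(\sigma+3\rho)}{2}$, so that in fact $\Lambda=S(\xi,\xi)$ exactly. The timelike convergence condition is by definition the inequality $S(\xi,\xi)>0$, and since $\kappa>0$ this is equivalent to $\sigma+3\rho>0$ (the cosmic strong energy condition remarked on just before the statement). In either formulation one gets $\Lambda=S(\xi,\xi)>0$.

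Finally, recalling the terminology fixed by \eqref{1.4} — a Ricci soliton is shrinking, steady, or expanding according as $\Lambda$ is negative, zero, or positive — I would conclude from $\Lambda>0$ that the soliton is expanding, which is the claim. Equivalently, this follows from \thmref{t4.11} applied with $\lambda=0$, since there $\frac{\kappa}{2}(\sigma+3\rho)>\lambda$ becomes $\frac{\kappa}{2}(\sigma+3\rho)>0$.

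The arithmetic here is trivial, so there is no real obstacle; the only point demanding care is consistency of conventions. One must check that the derivations of \eqref{3.14} and \eqref{4.3} both used the Lorentzian normalization $g(\xi,\xi)=-1$ and the torse-forming relation \eqref{2.4} in exactly the stated form, and that "expanding" is being read off with the sign convention of \eqref{1.4} rather than the opposite one used by some authors — with the sign of $\Lambda$ reversed in the soliton equation the conclusion would flip to "shrinking." Once these conventions are pinned down, the proof is a one-line substitution.
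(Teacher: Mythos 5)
Your proposal is correct and follows essentially the same route as the paper: the paper also specializes \eqref{4.3} and \eqref{3.14} to $\lambda=0$, uses the timelike convergence condition $S(\xi,\xi)>0$ to get $\sigma+3\rho>0$, and concludes $\Lambda=\frac{\kappa}{2}(\sigma+3\rho)>0$, i.e.\ the soliton is expanding under the sign convention of \eqref{1.4}.
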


Making use of \eqref{2.2}, the soliton equation \eqref{1.4} takes the form
\begin{align}
\label{4.4} (L_V g)(X.Y)=-2\{(a+\Lambda)g(X,Y)+b\eta(X)\eta(Y)\}.
\end{align}
Taking Lie-differentiation of \eqref{2.2} along the vector field $V$ and using \eqref{4.4}, it gives
\begin{align}
\label{4.5} (L_V S)(X,Y)=&b\{(L_V \eta)(X)\eta(Y)+\eta(X)(L_V\eta)(Y)\}\\
\nonumber &-2a\{(a+\Lambda)g(X,Y)+b\eta(X)\eta(Y)\}.
\end{align}
On the other hand, differentiating \eqref{2.2} covariantly along $Z$ and then using \eqref{2.5}, we obtain
\begin{align}
\label{4.6}
(\nabla_Z S)(X,Y)=b\{g(Z,X)\eta(Y)+g(Z,Y)\eta(X)+2\eta(X)\eta(Y)\eta(Z)\}.
\end{align}
Using the soliton equation \eqref{1.4} in commutation formula (\cite{YK}, page 23)
\begin{align*}
(L_V\nabla_Z g-\nabla_ZL_V g-\nabla_{[V,Z]})(X,Y)=-g((L_V\nabla)(Z,X),Y)-g((L_V\nabla)(Z,Y),X),
\end{align*}
we get
\begin{align}
\label{4.7} g((L_V\nabla)(X,Y),Z)=(\nabla_Z S)(X,Y)-(\nabla_X S)(Y,Z)-(\nabla_Y S)(X,Z)
\end{align}
In view of \eqref{4.6}, equation \eqref{4.7} takes the form
\begin{align}
\label{4.8}(L_V\nabla)(X,Y)=-2b\{g(X,Y)\xi+\eta(X)\eta(Y)\xi\}.
\end{align}
Taking covariant differentiation of \eqref{4.8} along Z and using \eqref{2.4} and \eqref{2.5}, yields
\begin{align}
\label{4.9} (\nabla_Z L_V\nabla)(X,Y)=&-2b\{g(X,Y)Z+\eta(X)\eta(Y)Z+g(X,Y)\eta(Z)\xi\\
\nonumber &+g(Z,X)\eta(Y)\xi+g(Z,Y)\eta(X)\xi+3\eta(X)\eta(Y)\eta(Z)\xi\}.
\end{align}
Again, according to Yano \cite{YK} we have the following commutation formula
\begin{align}
\label{4.10} (L_V R)(X,Y)Z=(\nabla_X L_V \nabla)(Y,Z)-(\nabla_Y L_V\nabla)(X,Z).
\end{align}
In view of \eqref{4.9}, equation \eqref{4.10} takes the form
\begin{align}
\label{4.11} (L_V R)(X,Y)Z=2b\{g(X,Z)Y-g(Y,Z)X+\eta(X)\eta(Z)Y-\eta(Y)\eta(Z)X\}.
\end{align}
On contracting \eqref{4.11}, we obtain
\begin{align}
\label{4.12}
(L_V S)(Y,Z)=-6b\{g(Y,Z)+\eta(Y)\eta(Z)\}.
\end{align}
Taking $Y=Z=\xi$ in \eqref{4.12}, we have
\begin{align}
\label{4.13} (L_V S)(\xi, \xi)=0.
\end{align}
Substituting $X=Y=\xi$ in \eqref{4.5} and then using \eqref{4.13}, we get 
\begin{align}
\label{4.14} -2b(L_V\eta)(\xi)+ 2a(a+\Lambda-b)=0.
\end{align}
Setting $Y=\xi$ in \eqref{4.4} it follows that $(L_Vg)(X,\xi)=-2(a+\Lambda-b)\eta(X)$. Lie-differentiating the equation $\eta(X)=g(X,\xi)$ along $V$ and by virtue of last equation, we find
\begin{align}
\label{4.15} (L_V \eta)(X)-g(L_V \xi,X)+2(a+\Lambda-b)\eta(X)=0.
\end{align}
Next, Lie-differentiating $g(\xi,\xi)=-1$ along $V$ and then using \eqref{4.4}, we obtain
\begin{align}
\label{4.16}\eta(L_V \xi)=-(a+\Lambda-b).
\end{align}
In view of \eqref{4.15} and\eqref{4.16}, we obtain
\begin{align}
\label{4.17}(L_V \eta)(\xi)=-\eta(L_V \xi)=(a+\Lambda-b).
\end{align}
Using \eqref{4.17} in \eqref{4.14}, we get
\begin{align}
(2a-2b)(a+\Lambda-b)=0.
\end{align}
After substituting the value of $a$ and $b$, we find
\begin{align}
[2\lambda-\kappa(\sigma+3\rho)]\left\{\lambda-\frac{\kappa(\sigma+3\rho)}{2}+\Lambda\right\}=0.
\end{align}
This implies either $\lambda=\frac{k(\sigma+3\rho)}{2}$, or $\Lambda=\frac{\kappa(\sigma+3\rho)}{2}-\lambda$. Thus we have the following cases;\\
{\bf Case 1:} If $\lambda=\frac{k(\sigma+3\rho)}{2}$, then $\Lambda\neq0$ and it follows that Ricci soliton is not steady. In this case equation \eqref{2.2} takes the form 
\begin{align}
S(X,Y)=\kappa(\sigma+\rho)\{g(X,Y)+\eta(X)\eta(Y)\}
\end{align}
i.e., perfect fluid spacetime with torse-forming vector field $\xi$ is a spacetime with the equal associated scalar constant.\\
{\bf Case 2:} If $\lambda\neq \frac{k(\sigma+3\rho)}{2}$, then $\Lambda=\frac{\kappa(\sigma+3\rho)}{2}-\lambda$. In this case Ricci soliton is expanding (or shrinking) according as $\frac{\kappa(\sigma+3\rho)}{2}>\lambda$ ( or $\frac{\kappa(\sigma+3\rho)}{2}<\lambda$).\\
Thus, we have
\begin{thm}
	If a perfect fluid spacetime with torse-forming vector field $\xi$ admits a Ricci soliton $(g,V,\Lambda)$, then either every perfect fluid spacetime with torse-forming vector field $\xi$ is a spactime with the equal associated scalar and Ricci soliton is not steady, or the Ricci soliton is expanding (or shrinking) according as $\frac{\kappa(\sigma+3\rho)}{2}>\lambda$ (or $ \frac{\kappa(\sigma+3\rho)}{2}<\lambda$). 
\end{thm}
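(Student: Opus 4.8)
The plan is to begin from the soliton identity \eqref{1.4} for an unspecified potential $V$ and extract an algebraic constraint on $\Lambda$ by iterated differentiation, at each stage substituting the quasi-Einstein form \eqref{2.2} of $S$ together with the torse-forming identities of \S2. First I would insert \eqref{2.2} into \eqref{1.4} to obtain $(L_Vg)(X,Y)=-2\{(a+\Lambda)g(X,Y)+b\eta(X)\eta(Y)\}$; then Lie-differentiate \eqref{2.2} itself, writing $L_VS$ through $L_Vg$ and $L_V\eta$ so that $L_V\eta$ is the only unknown tensor; and separately differentiate \eqref{2.2} covariantly, using $(\nabla_X\eta)(Y)=g(X,Y)+\eta(X)\eta(Y)$, to get $\nabla S$ in closed form.

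The heart of the argument is the transport chain $L_V\nabla\to\nabla(L_V\nabla)\to L_VR\to L_VS$. Feeding \eqref{1.4} into Yano's commutation formula relating $L_V\nabla g$, $\nabla L_Vg$ and $L_V\nabla$ expresses $L_V\nabla$ purely through the now-known $\nabla S$, hence in closed form (a combination of $g\otimes\xi$ and $\eta\otimes\eta\otimes\xi$ terms); differentiating once more and substituting into $(L_VR)(X,Y)Z=(\nabla_XL_V\nabla)(Y,Z)-(\nabla_YL_V\nabla)(X,Z)$ gives $L_VR$ explicitly, and a contraction yields a \emph{second} formula for $L_VS$. Evaluating that formula at $\xi$ forces $(L_VS)(\xi,\xi)=0$.

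It remains to reconcile the two expressions for $L_VS$. Setting $X=Y=\xi$ in the Lie-derivative expression and imposing $(L_VS)(\xi,\xi)=0$ gives a scalar relation in which $(L_V\eta)(\xi)$ enters linearly; computing $(L_V\eta)(\xi)=a+\Lambda-b$ by Lie-differentiating $\eta(X)=g(X,\xi)$ and $g(\xi,\xi)=-1$ along $V$ (using the formula for $L_Vg$ above) and back-substituting collapses everything to $(2a-2b)(a+\Lambda-b)=0$. With $a=\lambda+\tfrac{\kappa(\sigma-\rho)}{2}$ and $b=\kappa(\sigma+\rho)$, one has $a-b=\lambda-\tfrac{\kappa(\sigma+3\rho)}{2}$, so the relation reads $[2\lambda-\kappa(\sigma+3\rho)]\{\lambda-\tfrac{\kappa(\sigma+3\rho)}{2}+\Lambda\}=0$. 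If the first factor vanishes then $a=b$ and \eqref{2.2} becomes $S=\kappa(\sigma+\rho)(g+\eta\otimes\eta)$, the equal-associated-scalar case, in which one also records $\Lambda\neq0$; if the second factor vanishes then $\Lambda=\tfrac{\kappa(\sigma+3\rho)}{2}-\lambda$, whose sign relative to $0$ reads off the expanding/shrinking alternative.

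The step I expect to be the main obstacle is the sign-and-symmetrization bookkeeping along the middle chain: $L_V\nabla$, $\nabla(L_V\nabla)$, $L_VR$ and $L_VS$ each carry several $g\otimes\eta$ and $\eta\otimes\eta\otimes\eta$ type terms, and a single misplaced coefficient there would corrupt the final scalar identity. A secondary point is that ``$\Lambda\neq0$'' in the first branch is not implied by $(2a-2b)(a+\Lambda-b)=0$ alone and needs a short supplementary argument, most naturally by exploiting the contracted second expression for $L_VS$, which yields an additional scalar relation among $a$, $b$ and $\Lambda$.
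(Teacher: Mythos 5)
Your proposal follows essentially the same route as the paper: the chain \eqref{4.4}--\eqref{4.6}, Yano's commutation formulas giving $L_V\nabla$, $\nabla L_V\nabla$, $L_VR$ and hence $(L_VS)(\xi,\xi)=0$, combined with $(L_V\eta)(\xi)=a+\Lambda-b$, collapsing to $(2a-2b)(a+\Lambda-b)=0$ and the same two-case split. Your closing caveat is well taken but is a defect of the paper rather than of your plan: the paper simply asserts $\Lambda\neq0$ in the first branch without the supplementary argument you correctly note is needed.
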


If $X$ and $Y$ are replaced with $\xi$, then it follows from \eqref{4.8} that 
\begin{align}
\label{4.21} (L_V \nabla)(\xi,\xi)=0.
\end{align}
Recalling the formula \eqref{3.11} togther with $X=Y=\xi$ and then using \eqref{2.4} and \eqref{4.21}, we conclude that
\begin{align}
\label{4.22}\nabla_\xi\nabla_\xi V+R(V,\xi)\xi=0.
\end{align}
Hence, equation \eqref{4.22} implies that potential vector field $V$ is a Jacobi vector field along the geodesics of $\xi$. Thus we formulate the following theorem;
\begin{thm}
	If a perfect fluid spacetime with torse-forming vector field $\xi$ admits a Ricci soliton togther with the potential vector field $V$, then $V$ is a  Jacobi vector field along the geodesics of $\xi$.
\end{thm}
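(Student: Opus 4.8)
The plan is to leverage the closed-form expression for $(L_V\nabla)$ already established in \eqref{4.8} and substitute it into Yano's commutation formula \eqref{3.11} evaluated along the flow of $\xi$. No new identity is needed beyond what the soliton hypothesis \eqref{1.4} has already produced.

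First I would specialize \eqref{4.8}, namely $(L_V\nabla)(X,Y)=-2b\{g(X,Y)\xi+\eta(X)\eta(Y)\xi\}$, to $X=Y=\xi$. Since $\eta(\xi)=g(\xi,\xi)=-1$, the bracket becomes $g(\xi,\xi)+\eta(\xi)\eta(\xi)=-1+1=0$, so that $(L_V\nabla)(\xi,\xi)=0$, which is precisely \eqref{4.21}. Next I would recall the formula \eqref{3.11}, $(L_V\nabla)(X,Y)=\nabla_X\nabla_Y V-\nabla_{\nabla_X Y}V+R(V,X)Y$, and put $X=Y=\xi$. Because $\xi$ is torse-forming, \eqref{2.4} (equivalently the second relation in \eqref{2.6}) gives $\nabla_\xi\xi=0$, so the middle term vanishes and we are left with $(L_V\nabla)(\xi,\xi)=\nabla_\xi\nabla_\xi V+R(V,\xi)\xi$. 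Combining this with \eqref{4.21} yields $\nabla_\xi\nabla_\xi V+R(V,\xi)\xi=0$, i.e.\ \eqref{4.22}.

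Finally, since $\nabla_\xi\xi=0$ the integral curves of $\xi$ are geodesics, and the displayed identity $\nabla_\xi\nabla_\xi V+R(V,\xi)\xi=0$ is exactly the Jacobi equation for the restriction of $V$ to such a geodesic. Hence $V$ is a Jacobi vector field along the geodesics determined by $\xi$, which is the assertion of the theorem.

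I do not expect a genuine obstacle here: the argument reduces to two substitutions into known formulas. The only point requiring care is the bookkeeping of sign and normalization conventions—specifically, verifying that the bracket in \eqref{4.8} really vanishes at $(\xi,\xi)$, which hinges on the Lorentzian normalization $g(\xi,\xi)=-1$ rather than $+1$, and checking that the curvature term $R(V,\xi)\xi$ in \eqref{3.11} enters with the sign that matches the standard form of the Jacobi equation used in the conclusion.
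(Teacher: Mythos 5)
Your argument is exactly the paper's: specialize \eqref{4.8} to $X=Y=\xi$ to get $(L_V\nabla)(\xi,\xi)=0$, then substitute $X=Y=\xi$ into \eqref{3.11} using $\nabla_\xi\xi=0$ to obtain $\nabla_\xi\nabla_\xi V+R(V,\xi)\xi=0$, the Jacobi equation along the geodesics of $\xi$. The proof is correct and coincides with the one given in the paper.
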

 Equation \eqref{4.4} can be written as 
\begin{align}
\label{4.23}g(\nabla_X V,Y)+g(\nabla_Y V,X)+2\{(a+\Lambda)g(X,Y)+b\eta(X)\eta(Y)\}=0.
\end{align}
Suppose $\omega$ is $1-$form, metrically equivalent to $V$ and is given by $\omega(X)=g(X,V)$ for an arbitrary vector field $X$, then the exterior derivative $d\omega$ of $\omega$ is given by
\begin{align}
\label{4.24}2(d\omega)(X,Y)=g(\nabla_X V, Y)-g(\nabla_YV,X).
\end{align}
As $d\omega$ is a skew-symmetric, if we define a tensor field $F$ of type $(1,1)$ by
\begin{align}
(d\omega)(X,Y)=g(X,FY).
\end{align}
then, $F$ is skew self-adjoint i.e., $g(X,FY)=-g(FX,Y)$. Thus equation \eqref{4.24} takes the form
\begin{align*}
2g(X,FY)=g(\nabla_X V, Y)-g(\nabla_YV,X).
\end{align*}
Adding it to equation \eqref{4.23} side by side, and factoring out $Y$ gives
\begin{align}
\label{4.26}\nabla_X V=-FX-(a+\Lambda)X-b\eta(X)\xi.
\end{align}
Substituting this equation in $R(X,Y)V=\nabla_X\nabla_Y V-\nabla_Y \nabla_X V-\nabla_{[X,Y]} V$ we obtain
\begin{align}
\label{4.27}R(X,Y)V=(\nabla_Y F)X-(\nabla_X F)Y+b\{Y\eta(X)-X\eta(Y)\}.
\end{align}
Noting that $d\omega$ is closed, we have
\begin{align}
\label{4.28}g(X,(\nabla_Z F)Y)+g(Y.(\nabla_X F)Z)+g(Z,(\nabla_Y F)X)=0.
\end{align}
Taking inner product of \eqref{4.27} with $Z$ we get
\begin{align}
\label{4.29}g(R(X,Y)V,Z)=&g((\nabla_Y F)X,Z)-g((\nabla_X F)Y,Z)\\
\nonumber&+b\{g(Y,Z)\eta(X)-g(X,Z)\eta(Y)\}
\end{align}
The skew self-adjointness of $F$ implies skew self-adjointness of $\nabla_X F$. Thus using \eqref{4.28} in \eqref{4.29} gives
\begin{align}
\label{4.30}g(R(X,Y)V,Z)=b\{g(Y,Z)\eta(X)-g(X,Z)\eta(Y)\}-g(X,(\nabla_Z F)Y).
\end{align}
Setting $X=Z=\sum_{i=1}^{4}e_i$ in \eqref{4.30}, where $e_i$'s be a local orthonormal frames, provides
\begin{align}
\label{4.31} S(Y,V)=3b\eta(Y)-(div F)Y,
\end{align}
where $div F$ denotes divergence of tensor field $F$. Equating \eqref{4.31} with \eqref{2.2}, gives
\begin{align}
\label{4.32}(div F)Y=\kappa(\sigma+\rho)(3-\eta(V))\eta(Y)-\left\{\lambda+\frac{\kappa(\sigma-\rho)}{2}\right\}\omega(Y).
\end{align}
We compute the covariant derivative of the squared $g$-norm of V using
\eqref{4.26} as follows
\begin{align}
\label{4.33}\nabla_X|V|^2=2g(\nabla_XV.V)=-2 g(FX,V)-2\{(a+\Lambda)g(X,V)+b\eta(X)\eta(V)\}.
\end{align}
In view of \eqref{4.4}, equation \eqref{4.33} takes the form
\begin{align}
\label{4.34}\nabla_X|V|^2+2g(FX,V)+2(L_V g)(X,V)=0.
\end{align}
Hence, we state the following 
\begin{thm}
	If a perfect fluid spacetime with torse-forming vector field $\xi$ admits a Ricci soliton, then Ricci soliton vector $V$ and its metric dual $1-$form $\omega$ satisfies the formula \eqref{4.32} and \eqref{4.34}.
\end{thm}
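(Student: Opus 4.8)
The plan is to reach the two displayed formulas \eqref{4.32} and \eqref{4.34} as the last links of a chain of identities that begins with the soliton equation \eqref{1.4} and the structural relations for a torse-forming $\xi$. First I would rewrite the soliton equation as \eqref{4.4} (expanding $S$ by \eqref{2.2}) and split the Lie derivative, $(L_Vg)(X,Y)=g(\nabla_XV,Y)+g(\nabla_YV,X)$, to obtain the symmetric relation \eqref{4.23}. Passing to the $1$-form $\omega(X)=g(X,V)$ dual to $V$, its exterior derivative obeys $2(d\omega)(X,Y)=g(\nabla_XV,Y)-g(\nabla_YV,X)$; defining the $(1,1)$-tensor $F$ by $(d\omega)(X,Y)=g(X,FY)$ makes $F$ skew self-adjoint. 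Adding this antisymmetric identity to \eqref{4.23} and cancelling the free slot $Y$ yields the decomposition \eqref{4.26} of $\nabla_XV$ into a skew part $-FX$, a conformal part $-(a+\Lambda)X$, and a $\xi$-component $-b\eta(X)\xi$.

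Next I would substitute \eqref{4.26} into $R(X,Y)V=\nabla_X\nabla_YV-\nabla_Y\nabla_XV-\nabla_{[X,Y]}V$, using \eqref{2.4} and \eqref{2.5} to differentiate the terms of the form $\eta(\cdot)\xi$; because $a+\Lambda$ is constant its contribution vanishes by torsion-freeness, and the $\xi$-component contributes only terms that mostly cancel, leaving precisely \eqref{4.27}. Taking the $g$-inner product with $Z$ gives \eqref{4.29}; then eliminating two of the three $\nabla F$-terms via the cyclic identity \eqref{4.28} (a consequence of the closedness of the $2$-form $d\omega$) together with the skew self-adjointness of each $\nabla_ZF$ collapses the right-hand side to \eqref{4.30}. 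Contracting \eqref{4.30} over a local orthonormal frame $\{e_i\}$ with $X=Z=e_i$ then produces \eqref{4.31}, $S(Y,V)=3b\eta(Y)-(\mathrm{div}\,F)Y$. Finally, equating \eqref{4.31} with the value $S(Y,V)=a\,\omega(Y)+b\,\eta(V)\eta(Y)$ read off from \eqref{2.2}, and inserting $a=\lambda+\tfrac{\kappa(\sigma-\rho)}{2}$, $b=\kappa(\sigma+\rho)$, gives \eqref{4.32}.

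The derivation of \eqref{4.34} is much shorter: differentiate $|V|^2$, write $\nabla_X|V|^2=2g(\nabla_XV,V)$, substitute \eqref{4.26} to obtain \eqref{4.33}, and then use \eqref{4.4} to re-express the remaining bracketed terms through $(L_Vg)(X,V)$, which rearranges into \eqref{4.34}.

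The steps that demand the most care are the curvature computation \eqref{4.27}, where one has to keep track of the $\xi$-derivatives forced by \eqref{2.4}, and, above all, the frame contraction producing $\mathrm{div}\,F$: the required cancellations rest essentially on the skew self-adjointness of $F$ and of $\nabla F$ (used to turn $\sum_i g(e_i,(\nabla_{e_i}F)Y)$ into $(\mathrm{div}\,F)Y$) and on the Bianchi-type identity \eqref{4.28}, while the constant $3b$ depends on the dimension being $4$ and on a consistent choice of signature signs $g(e_i,e_i)=\pm1$ and of sign conventions for $R$, $S$ and $\mathrm{div}$. Everything else is routine substitution.
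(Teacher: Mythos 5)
Your proposal is correct and follows essentially the same route as the paper: the same passage from \eqref{4.4} to \eqref{4.23}, the same decomposition $\nabla_XV=-FX-(a+\Lambda)X-b\eta(X)\xi$ via the skew self-adjoint $F$ dual to $d\omega$, the same curvature computation and frame contraction giving $S(Y,V)=3b\eta(Y)-(\mathrm{div}\,F)Y$, comparison with \eqref{2.2} to get \eqref{4.32}, and differentiation of $|V|^2$ combined with \eqref{4.4} to get \eqref{4.34}. (Incidentally, carrying out that last substitution literally gives $\nabla_X|V|^2+2g(FX,V)-(L_Vg)(X,V)=0$, so the coefficient $+2$ on the Lie-derivative term in the paper's \eqref{4.34} appears to be a misprint rather than a flaw in your argument.)
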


{\bf Authors address:}\\
{\bf Venkatesha, Aruna Kumara H}\\
Department of Mathematics, Kuvempu University,\\
 Shankaraghatta - 577 451, Shimoga, Karnataka, INDIA.\\
 e-mail: {\verb+vensmath@gmail.com+, arunmathsku@gmail.com}\\
\end{document}